\title{Degree Monotone Paths and Graph Operations}
\date{}
\begin{document}
\newtheorem{theorem}{Theorem}[section]
\newtheorem{definition}{Definition}[section]
\newtheorem{proposition}[theorem]{Proposition}
\newtheorem{corollary}[theorem]{Corollary}
\newtheorem{lemma}[theorem]{Lemma}
\newcommand*\cartprod{\mbox{ } \Box \mbox{ }}
\newtheoremstyle{break}
  {}
  {}
  {\itshape}
  {}
  {\bfseries}
  {.}
  {\newline}
  {}

\theoremstyle{break}

\newtheorem{propskip}[theorem]{Proposition}
\DeclareGraphicsExtensions{.pdf,.png,.jpg}
\author{Yair Caro \\ Department of Mathematics\\ University of Haifa-Oranim \\ Israel \and Josef  Lauri\\ Department of Mathematics \\ University of Malta
\\ Malta \and Christina Zarb \\Department of Mathematics \\University of Malta \\Malta }

\maketitle
\begin{abstract}
A path $P$ in a graph $G$ is said to be a degree monotone path if the sequence of degrees of the vertices of $P$ in the order in which they appear on $P$ is monotonic.  The length of the longest degree monotone path in $G$ is denoted by $mp(G)$.  This parameter was first studied in an earlier paper by the authors where bounds in terms of other parameters of $G$ were obtained.

In this paper we concentrate on the study of how $mp(G)$ changes under various operations on $G$.  We first consider how $mp(G)$ changes when an edge is deleted, added, contracted or subdivided.  We similarly consider the effects of adding or deleting a vertex.  We sometimes restrict our attention to particular classes of graphs.  

Finally we study $mp(G \times H)$ in terms of $mp(G)$ and $mp(H)$ where $\times$ is either the Cartesian product or the join of two graphs.

In all these cases we give bounds on the parameter $mp$ of the modified graph in terms of the original graph or graphs and we show that all the bounds are sharp.
\end{abstract}
\section{Introduction}

Given a graph $G$, a degree monotone path is a path $v_1v_2 \ldots v_k$ such that $deg(v_1) \leq deg(v_2) \leq \ldots \leq deg(v_k)$ or $deg(v_1) \geq deg(v_2 \geq \ldots \geq deg(v_k)$.  This notion, inspired by the well-known Erd{\H{o}}s-Szekeres Theorem \cite{eliavs2013higher,erdos1935combinatorial}, was introduced in \cite{updownhilldom} under the name of uphill and downhill path in relation to domination problems, also studied in \cite{deering2013uphill,downhill2,downhill}.  In \cite{updownhilldom}, the authors specifically suggested the study of the parameter $mp(G)$, which denotes the length of the longest degree monotone path in $G$.  This parameter was first studied by the authors in \cite{dmpclz}.  Links between this parameter and other classical paramaters such as the chromatic number and clique number, using in particular the Gallai-Roy Theorem \cite{dougwest} were explored, and lower bounds and upper bounds for $mp(G)$ were established in \cite{dmpclz}.  The close relation to Turan numbers \cite{bollobas2004extremal} was also studied and explained in  \cite{dmpclz}.

In this paper we consider another natural question related to the parameter $mp(G)$, that of the effect of graph operations on this parameter.  We consider both  operations on a single graph $G$  which produce a new graph $G'$, as well as  operations applied to two graphs to produce a new single graph.

In the first section, we consider operations involving edges, namely edge addition and deletion, subdivision and edge contraction while in the second section vertex addition and deletion is discussed.  For each operation we obtain sharp bounds on $mp(G')$, and give constructions which achieve these bounds.  In some cases, we consider the operation for a particular family of graphs which gives more interesting results.

We then consider the \emph{Cartesian product} and the \emph{graph join} for two graphs $G$ and $H$, where again we give sharp bounds and constructions which achieve these bounds.

Any graph theory terms not defined here can be found in \cite{dougwest}.

\section{Edge Operations}

\subsection{Edge Addition and Deletion}

We now look at the concept of adding/deleting an edge to or from a given graph $G$, and consider the effect of these operations on $mp(G)$.  We add an edge $e$ by connecting two vertices in $V(G)$ which are non-adjacent, and the resulting graph is denoted by $G+e$, while when we delete an edge $e$, the resulting graph is denoted by $G-e$.

\begin{theorem}
Given a graph $G$,
\begin{enumerate}
\item{$\frac{mp(G)+1}{3} \leq mp(G+e) \leq 3mp(G)$.}
\item{$\frac{mp(G)}{3} \leq mp(G-e) \leq 3mp(G)-1$.}
\end{enumerate}

In both cases, the bounds attained are sharp.
\end{theorem}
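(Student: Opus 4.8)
The plan is to establish statement (1) in full and then deduce statement (2) almost for free. The one fact driving everything is that passing from $G$ to $G+e$, where $e=uv$, raises exactly the two degrees $\deg(u)$ and $\deg(v)$ by one and changes nothing else (and passing from $G$ to $G-e$ does the same with $-1$). Granting (1) for every graph and every addable edge, I would apply its upper bound to the graph $G-e$ with the edge $e$, using $(G-e)+e=G$, to get $mp(G)\le 3\,mp(G-e)$, i.e. $mp(G-e)\ge mp(G)/3$; and applying the lower bound of (1) to $G-e$ gives $mp(G)\ge (mp(G-e)+1)/3$, i.e. $mp(G-e)\le 3\,mp(G)-1$. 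The same substitution $G\leftrightarrow G-e$ will transport the extremal examples, so it suffices to prove and to realise sharpness for (1).

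For $mp(G+e)\le 3\,mp(G)$ I would take a longest degree monotone path $P=p_1p_2\cdots p_k$ in $G+e$, assumed non-decreasing in degree, and label the vertices of $P$ among $\{u,v\}$ as $p_i$ and $p_j$ with $i<j$. If neither of $u,v$ lies on $P$ then $P$ is already a degree monotone path of $G$; if only one does, the argument below produces two blocks instead of three. Cut $P$ into the consecutive blocks $p_1\cdots p_{i-1}$, $p_i\cdots p_{j-1}$, $p_j\cdots p_k$. In each block the relevant vertex of $\{u,v\}$, whose degree \emph{drops} in passing to $G$, sits at the start of its block, that is at the low end of a non-decreasing run, so the drop by one cannot break monotonicity; also none of the three blocks uses the edge $e$. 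Hence each block is a degree monotone path of $G$ and so has at most $mp(G)$ vertices, and since the blocks partition $V(P)$ we conclude $k\le 3\,mp(G)$.

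For $mp(G+e)\ge\tfrac{mp(G)+1}{3}$ I would run the mirror construction. Take a longest degree monotone path $Q=q_1q_2\cdots q_m$ of $G$, non-decreasing, and let $q_i=u$, $q_j=v$ with $i<j$; if neither or only one of $u,v$ lies on $Q$ the bound is immediate because $Q$, or one of the two halves of $Q$, survives as a degree monotone path of $G+e$, so assume both lie on $Q$, and note $i,j$ cannot be consecutive since $e\notin E(G)$. Now in $G+e$ exhibit the three degree monotone paths $q_1\cdots q_i q_j$ (which traverses the new edge $e$ from $q_i$ to $q_j$), $q_{i+1}\cdots q_j$, and $q_{j+1}\cdots q_m$; monotonicity survives because in each of them the degree-raised vertex sits at the high end of a non-decreasing run. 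These have $i+1$, $j-i$ and $m-j$ vertices, totalling $m+1$, so the longest has at least $\lceil (m+1)/3\rceil$ vertices, which gives the claim.

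It remains to realise the bounds. For the upper bound of (1) the equality case of the argument above is the blueprint: I would glue three disjoint extremal ``blocks'', each a graph with $mp=M$ carrying a prescribed degree pattern at two designated ports, by a single edge $e$, so that $e$ links them into one degree monotone path on $3M$ vertices while every degree monotone path confined to $G$ still has at most $M$ vertices. For the lower bound of (1) I would build, dually, a graph $G$ with $mp(G)=M\equiv 2\pmod 3$ whose long degree monotone paths are all forced through two vertices $u,v$ as interior pivots of equal degree, so that inserting $e=uv$ raises $\deg(u),\deg(v)$ and shatters all of them, leaving $mp(G+e)=\tfrac{M+1}{3}$. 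The hard part will be precisely this last step: one must check not only that the ``before'' graph has the small stated value of $mp$ but also that the ``after'' graph contains no accidental longer degree monotone path, and this careful tracking of degree sequences is where the real effort goes — the inequalities themselves reduce to the single idea of splitting at the two endpoints of $e$.
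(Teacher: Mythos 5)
Your inequality arguments are correct and follow essentially the same route as the paper: the upper bound $mp(G+e)\le 3\,mp(G)$ comes from cutting a longest degree monotone path of $G+e$ into three consecutive blocks at the two endpoints of $e$, each block surviving in $G$ because the degree that drops by one sits at the low end of its (non-decreasing) block; and the transfer between parts (1) and (2) via $H=G\pm e$ is exactly the paper's duality trick, just organised differently (the paper proves both upper bounds directly and deduces both lower bounds, whereas you prove both bounds of (1) and deduce all of (2) -- both organisations are valid, and your direct proof of $mp(G+e)\ge\frac{mp(G)+1}{3}$ via the three paths $q_1\cdots q_iq_j$, $q_{i+1}\cdots q_j$, $q_{j+1}\cdots q_m$ totalling $m+1$ vertices is a clean alternative to the paper's proof by contradiction). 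Your observation that extremal examples transport under the same substitution is also correct and is what the paper does.

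The genuine gap is sharpness, which is part of the theorem's statement: you produce no actual extremal graphs, and you yourself flag that the verification is ``where the real effort goes.'' Moreover, the blueprint you describe for the upper bound -- ``glue three disjoint extremal blocks by a single edge'' -- is structurally off: one edge cannot join three disjoint pieces into a single path, and in any case adding an edge between components changes only two degrees, not the connectivity pattern you need. The paper's mechanism is different: take a single long path $P_{3k}$ ($v_1,\dots,v_{3k}$), attach pendant leaves so that almost every internal vertex has degree $3$, but leave two non-adjacent vertices $v_{k+1}$ and $v_{2k+1}$ at degree $2$ (and pad the far end with an auxiliary degree-$3$ vertex $z$); these two low-degree bottlenecks cut every degree monotone path down to $k$ vertices, and adding $e=(v_{k+1},v_{2k+1})$ raises both to degree $3$, merging the three segments into one degree monotone path on $3k$ vertices. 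The lower-bound example is the reverse: a path whose degree sequence $4,3,2,\dots,2,1$ is globally monotone of length $3k-1$, which the added edge shatters into three runs of length $k$ by creating two interior degree spikes. Until you exhibit such graphs and verify their $mp$ values before and after the edge addition, the sharpness claims remain unproved.
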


\begin{proof}

Let us first consider the right hand sides of both inequalities.  

\noindent 1. \indent Let $G$ be a graph with $mp(G)=k \geq 1$.   If $k=1$ then $G$ is the empty graph with no edges, hence $mp(G+e)=2 \leq 3=3mp(G)$.  So let us assume that $k \geq 2$.  

Suppose that $mp(G+e)=t \geq 3k+1$.  Consider $P=v_1,v_2,\ldots,v_t$, a longest degree monotone path in $G+e$ in non-decreasing order.  Observe that $e$ must have at least one of its vertices in $P$, otherwise $P$ was originally in $G$.

\medskip

\noindent \emph{Case 1.1}. \indent Let us assume that $e=(v_i,z)$, where $z$ is not on the path.  If $i=1$, then clearly $v_1,\ldots,v_t$ is also a degree monotone path in $G$ and hence $t \leq mp(G)=k <3k$, a contradiction.

If $i=t$ then $v_1,\ldots,v_{t-1}$ is a degree monotone path in $G$ and hence $t-1 \leq k$ which implies $t \leq k+1 < 3k$.

So let us assume $1 < i <t$.  Let us consider the paths  $v_1,\ldots,v_{i-1}$ and $v_i,\ldots,v_t$ --- both are non-decreasing degree monotone paths in $G$ and hence together they have length at most $2k$, implying that $t \leq 2k <3k$.

\medskip

\noindent \emph{Case 1.2}. \indent Now assume $e=(v_i,v_j)$, $i<j$.  Hence, in $G+e$, all vertices in $P$ have the same degree as they have in $G$ except for $v_i$ and $v_j$ whose degree has increased by one.  Consider the paths $P_1=v_1,\ldots,v_i$, $P_2=v_i,\ldots,v_{j-1}$ and $P_3=v_j,\ldots,v_t$.

$P_3$ is  clearly a  degree monotone path in $G$ since $deg_G(v_j)+1 = deg_{G+e}(v_j) \leq deg_{G+e}(v_{j+1})$ ( or $j=t$), hence $|V(P_3)| \leq k$.  Similarly $|V(P_2)| \leq k$.  So if $t \geq 3k+1$, $|V(P_1)| \geq k+2$ since $v_i$ is both in $P_1$ and $P_2$.  But $v_1,\ldots,v_{i-1}$ is a degree monotone path of length $k+1$ in $G$, contradicting the fact the $mp(G)=k$.  Hence $mp(G+e) \leq 3k$.

\medskip

For sharpness of the bound, consider $P_{3k}$, to which we add a leaf to vertices $v_2$ up to $v_{3k-2}$ except vertices $v_{k+1}$ and $v_{2k+1}$, and we connect $v_{3k-1}$ to a new vertex $z$ to which we add two leaves.  Thus the resulting graph $G_1^+$ has  $6k-2$ vertices.   Figure \ref{diag1} shows the construction for $k=4$. It is clear that $mp(G_1^+)=k$.  Now if we add the edge $e=(v_{k+1},v_{2k+1})$, these two vertices now have degree 3 also, and hence the path $v_1,v_2,\ldots,v_{3k-1},z$ is a degree monotone path so $mp(G_1^++e)=3k=3mp(G_1^+)$.

\begin{figure}[h!]
\centering
\includegraphics{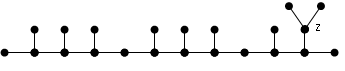} 
\caption{$G_1^+$  when $k=4$} \label{diag1}
\end{figure}

\noindent 2. \indent  Let $G$ be a graph with $mp(G)=k \geq 1$.   If $k=1$ then $G$ is the empty graph with no edges, hence we cannot delete edges.  Therefore assume $k \geq 2$ and suppose that $mp(G-e)=t \geq 3k$.  Let $P=v_1,\ldots,v_t$ be a degree monotone path of maximum length in non-increasing order.  Observe that $e$ must have at least one of its vertices in $P$, otherwise $P$ was originally in $G$.

\medskip

\noindent \emph{Case 2.1}. \indent  Let us first assume that $e=(v_i,z)$, where $z$ is not on the path.   If $i=1$, then clearly $v_1,\ldots,v_t$ is also a degree monotone path in $G$ and hence $t \leq mp(G)=k <3k$, a contradiction.

If $i=t$ then $v_1,\ldots,v_{t-1}$ is a degree monotone path in $G$ and hence $t-1 \leq k$ which implies $t \leq k+1 < 3k$.

So let us assume $1 < i <t$.  Let us consider the paths  $v_1,\ldots,v_{i-1}$ and $v_i,\ldots,v_t$ --- both are non-increasing degree monotone paths in $G$ and hence together they have length at most 2k, implying that $t \leq 2k <3k$.
\medskip

\noindent \emph{Case 2.2}. \indent Now assume $e=(v_i,v_j)$, $i<j$.  Hence, in $G-e$, all vertices in $P$ have the same degree as they have in $G$ except for $v_i$ and $v_j$ whose degree has decreased by one.  Consider the paths $P_1=v_1,\ldots,v_i,v_j$, $P_2=v_i,\ldots,v_{j-1}$ and $P_3=v_i,v_j,\ldots,v_t$.

$P_3$ is  clearly a  degree monotone path in $G$ since $v_i$ and $v_j$ have a larger (by 1) degree in $G$ and $deg(v_i) \geq deg(v_j)$.  Similarly $|V(P_2)| \leq k$.  So if $t \geq 3k$, $|V(P_1)| \geq k+3$ since $v_i$ and $v_j$ are also on $P_2$ and $P_3$.  But $v_1,\ldots,v_{i-1}$ is a degree monotone path of length $k+1$ in $G$, contradicting the fact the $mp(G)=k$.  Hence $mp(G-e) \leq 3k-1$.

\medskip

For sharpness, let us construct the graph $G_1^-$ as follows: we start with $P_{3k-1}$ and add three leaves to $v_1$and a leaf to $v_2$;  we then connect $v_{k+1}$ to $v_{2k+1}$ --- we call this edge $e$.  Figure \ref{diag2} shows the construction for $k=4$.  Clearly $mp(G_1^-)=k$ while $mp(G_1^- -e)=3k-1$ with the path $v_1,\ldots,v_{3k-1}$ and hence  $mp(G_1^--e)=3mp(G_1^-)-1$.

\begin{figure}[h!]
\centering
\includegraphics{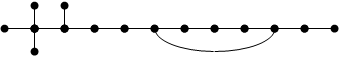} 
\caption{$G_1^-$  when $k=4$} \label{diag2}
\end{figure}

\bigskip

Let us now turn to the lower bounds in both cases.  These results are derived from those for the upperbounds.
\medskip

\noindent 1. \indent  Suppose for some graph $G$, $mp(G+e) < \frac{mp(G)+1}{3}$.  Then if we take $H=G+e$ so that $G=H-e$, we have $mp(H) < \frac{mp(H-e)+1}{3}$, which implies that $mp(H-e) > 3mp(H)-1$, contradicting the righthand side of part 2.

For sharpness, let $G_2^{+}$ be the graph $G_1^-$ without the edge $e$.  Then $mp(G_2^{+}+e)=k$ while $mp(G_2^{+})=3k-1$ giving $mp(G_2^{+}+e)=\frac{mp(G_2^{+})+1}{3}$.

\medskip

\noindent 2. \indent  Suppose that for some graph $G$, $mp(G-e) < \frac{mp(G)}{3}$.  Then let $H=G-e$ so that $G=H+e$ giving $mp(H) < \frac{mp(H+e)}{3}$ which implies $mp(H+e) > 3mp(H)$, contradicing the upper bound in part 1.

For sharpness of the bound, let $G_2^{-}$ be the graph $G_1^+$ with the added edge $e=(v_{k+1},v_{2k+1})$.  Then $mp(G_2^{-})=3k$ and $mp(G_2^{-}-e)=\frac{mp(G_2^{-})}{3}$.
\end{proof}

\subsection{Subdivision}

Given a graph $G$ and $e=(u,v) \in E(G)$, the subdivision of $e$ is the addition of a new vertex $w$ such that $(u,w)$ is an edge and $(w,v)$ is an edge but $(u,v)$ is no longer an edge.  Note that the degree of $u$ and $v$ remains the same, and the degree of $w$ is 2.  The graph obtained by subdividing $e$ is denoted by $G^*$.  Again, we look at the effect of this operation on the maximum length of a degree monotone path, $mp(G)$.

\begin{theorem}
Let $G$ be a graph and $e=(u,v)$ an edge in $G$ such that $G^*$ is the graph obtained by subdividing $e$ with the vertex $w$. Then \[\left \lceil \frac{mp(G)+1}{2} \right \rceil \leq mp(G^*) \leq mp(G)+1,\] and both bounds are sharp.
\end{theorem}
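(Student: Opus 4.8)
The plan is to treat the upper and lower bounds separately, and in each case to argue by relating degree monotone paths in $G^*$ to those in $G$ and vice versa. For the upper bound $mp(G^*) \le mp(G)+1$, I would take a longest degree monotone path $P$ in $G^*$, say in non-decreasing order, of length $t = mp(G^*)$. The only vertex whose degree differs between $G$ and $G^*$ is the new vertex $w$, which has degree $2$ in $G^*$; every other vertex has the same degree in both graphs. If $w \notin V(P)$ then $P$ is already a degree monotone path in $G$ (note that if $P$ uses both $u$ and $v$ consecutively, the edge $uv$ is still available in $G$), so $t \le mp(G)$. If $w \in V(P)$, then in $P$ the vertex $w$ is adjacent only to vertices among $\{u,v\}$, so $w$ is either an endpoint of $P$ with neighbour $u$ or $v$, or an interior vertex with neighbours exactly $u$ and $v$. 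In the interior case, deleting $w$ and reconnecting $u$ to $v$ by the edge $uv$ (which exists in $G$) yields a walk on $t-1$ vertices whose degree sequence in $G$ is the original sequence with the entry for $w$ removed; since $\deg_{G^*}(w)=2$ sat between $\deg(u)$ and $\deg(v)$ in the monotone order, removing it preserves monotonicity, giving a degree monotone path of length $t-1$ in $G$, so $t-1 \le mp(G)$. In the endpoint case, simply delete $w$ to get a degree monotone path of length $t-1$ in $G$. Either way $t \le mp(G)+1$.

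For the lower bound $mp(G^*) \ge \lceil (mp(G)+1)/2 \rceil$, I would take a longest degree monotone path $Q = x_1 x_2 \cdots x_m$ in $G$ with $m = mp(G)$, again WLOG non-decreasing. If $Q$ does not use the edge $e=(u,v)$, then $Q$ survives intact in $G^*$ (all degrees of its vertices are unchanged), so $mp(G^*) \ge m \ge \lceil (m+1)/2 \rceil$. If $Q$ does use $e$, say $x_i x_{i+1} = uv$ (or $vu$), then in $G^*$ we can insert $w$ between $x_i$ and $x_{i+1}$ to get a path on $m+1$ vertices, but its degree sequence need no longer be monotone because $\deg_{G^*}(w) = 2$ may be smaller than both neighbours. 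However, the two subpaths $x_1 \cdots x_i$ and $x_{i+1} \cdots x_m$ are each still degree monotone in $G^*$, and one of them has at least $\lceil m/2 \rceil$ vertices; appending $w$ to whichever of these two halves it is "monotone-compatible" with — and I will need to check that $w$ can always be appended to at least one of the two halves, since $\deg(w)=2 \le \deg(u)$ and $\deg(w) = 2 \le \deg(v)$ unless $u$ or $v$ has degree $1$, which is impossible here as both are endpoints of $e$ and hence have degree $\ge 1$; more carefully, if $\deg_G(x_i) \ge 2$ then $w$ extends the first half downward at its $u$-end is not the issue — rather $w$ with degree $2$ attaches to the end of the half containing a vertex of degree $\le 2$, and in the non-decreasing order $x_1 \cdots x_i$ ends at $x_i = u$, so $w$ can be prepended to the second half $x_{i+1}\cdots x_m$ if $2 \le \deg(x_{i+1})$, which holds since $x_{i+1}$ has degree $\ge 1$ and if it equals $1$ then $m$ is small and the bound is trivial — gives a degree monotone path of length at least $\lceil m/2 \rceil + 1 \ge \lceil (m+1)/2 \rceil$ in $G^*$.

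The main obstacle, and the place where the argument needs genuine care rather than bookkeeping, is the lower-bound case when $Q$ uses the edge $e$: I must verify that the degree-$2$ vertex $w$ really can be attached to the end of at least one of the two halves of $Q$ without breaking monotonicity, handling separately the small cases where $u$ or $v$ has degree close to $1$ or $2$. The ceiling in the bound is exactly what absorbs the worst case, where $e$ sits in the middle of $Q$ and $w$'s low degree forces us to keep only one half plus $w$.

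Finally, for sharpness I would exhibit, for the upper bound, a graph where subdividing an edge genuinely increases $mp$ by $1$ — e.g. a path $P_m$ with pendant vertices arranged so the degree sequence is strictly monotone and subdividing a well-chosen end edge lets the new degree-$2$ vertex extend the monotone run — and for the lower bound, a graph (morally a path whose degrees increase to the middle and decrease again, realized via pendant edges, with $e$ the central edge) where $mp(G) = m$ but $mp(G^*) = \lceil (m+1)/2 \rceil$ exactly, since any monotone path in $G^*$ through $w$ is confined to one half. I would include the small figures analogous to Figures~\ref{diag1} and~\ref{diag2} for the two constructions.
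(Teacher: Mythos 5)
Your overall strategy is the paper's: for the upper bound, delete $w$ from a longest degree monotone path of $G^*$ (re-joining $u$ to $v$ when $w$ is interior), and for the lower bound, split a longest degree monotone path of $G$ at the subdivided edge and attach $w$ to one of the two pieces. The upper bound argument is correct as written. The lower bound, however, contains a genuine accounting error: you argue that one of the two halves $x_1\cdots x_i$ and $x_{i+1}\cdots x_m$ has at least $\lceil m/2\rceil$ vertices and that appending $w$ to ``whichever half it is monotone-compatible with'' yields a path on $\lceil m/2\rceil+1$ vertices. But $w$, having degree $2$, is not necessarily compatible with the \emph{longer} half: in non-decreasing order with $u=x_i$, if $\deg(u)>2$ then $w$ can only be prepended to the second half $x_{i+1}\cdots x_m$, and if that is the shorter half your claimed length is simply false (take $m=5$, $i=3$, $\deg(u)=5$: the pieces give $\max\{3,3\}=3$, not $4$). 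The correct count, which is what the paper does, is that the two pieces $x_1\cdots x_i$ and $w,x_{i+1},\ldots,x_m$ together contain $m+1$ vertices, so the longer one has at least $\lceil (m+1)/2\rceil$ vertices --- exactly the stated bound --- with the case $\deg(u)\le 2$ handled separately, since there the entire subdivided path is already monotone. Your sharpness sketches point in the right direction ($P_n$ for the upper bound; a path with a pendant leaf on each internal vertex and the middle edge subdivided for the lower bound, which is the paper's construction) but are left unverified.
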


\begin{proof}
Let us first consider the upper bound.  Let $P$ be a degree monotone path of maximum length in $G^*$.  We consider the following cases:

\noindent \emph{Case 1.} \indent If $w \not \in P$, then $P$ is a degree monotone path in $G$ hence $mp(G^*) \leq mp(G)$.

\noindent \emph{Case 2.} \indent  If $w \in P$, then either $u$ or $v$ or both are in $P$ otherwise $P=\{w\}$ which is not maximal.  We consider these cases separately:
\begin{itemize}
\item{If both $u$ and $v$ are in $P$ then $P-\{w\} \cup (u,v)$ is a degree monotone path in $G$ hence $mp(G^*) \leq mp(G)+1$.}
\item{If only one of $u$ or $v$, say $u$, is in $P$ then $P-\{w\}$ is a degree monotone path in $G$ and again $mp(G^*) \leq mp(G)+1$.}
\end{itemize}

For sharpness of the bound, consider $G=P_n$ the path on $n$ vertices --- then $mp(G)=n-1$, and subdividing any edge gives $mp(G^*)=n=mp(G)+1$.

For the lower bound, let $P$ be a degree monotone path of maximum length in $G$ and let $P=v_1,v_2,\ldots,v_t,v_{t+1},\ldots,v_k$, such that $u=v_t$ and $v=v_{t+1}$.  Let us consider $e=(u,v)$ and the subdividing vertex $w$.

\noindent \emph{Case 1.} \indent If $e=(u,v)$ is not on the path $P$, then $P$ is a degree monotone path in $G^*$ hence $mp(G^*) \geq mp(G)$.

\noindent \emph{Case 2.} \indent If the edge $e=(u,v)$ is in $P$, then as $w$ subdivides $e$, we get $P^*=v_1,\ldots,v_t,w,v_{t+1},\ldots,v_k$ a path in $G^*$, where $v_t=u$ and $v_{t+1}=v$. 

We assume without loss of generality that $P$ is a non-decreasing monotone path, and hence $deg(u) \leq deg(v)$.  The vertex $w$ has degree 2.  Now if $deg(u)>2$,  the paths $v_1,\ldots,v_t$ and the paths $w,v_{t+1},\ldots,v_k$ are degree monotone in $G^*$, while if $deg(u) \leq 2$, the path $P^*$ is degree montone in $G^*$.  Hence in $G^*$ there is a degree monotone path of length at least $\left \lceil \frac{k+1}{2} \right \rceil = \left \lceil \frac{mp(G)+1}{2} \right \rceil$.

For sharpness of the lower bound, let us take the path $P_n$ and add a leaf to the vertices $v_2$ up to $v_{n-1}$.  Hence $mp(G)=n-1$.  If we subdivide the edge $(v_{\lceil \frac{n}{2} \rceil},v_{\lceil \frac{n}{2} \rceil+1})$, we get $mp(G^*)=\left \lceil \frac{n}{2} \right \rceil= \left \lceil \frac{mp(G)+1}{2} \right \rceil$.
\end{proof}

\subsection{Edge Contraction}
In a graph $G$, \emph{contraction of an edg}e $e=(u,v)$ is the replacement of $u$ and $v$ with a single vertex $w$ adjacent (without multiple edges) to all vertices in $N(u) \backslash v \cup N(v) \backslash u$.  The resulting graph $G \cdot e$ has one less  vertex  than $G$.    In case a vertex $z \in V(G)$ is adjacent to both $u$ and $v$, the degree of $z$ in $G \cdot e$ decreases by one --- otherwise it remains the same as in $G$.  In view of this we first consider triangle-free graphs, in which case the degrees of the neighbours of $u$ and $v$ remain unchanged in $G \cdot e$, and $deg(w)=deg(u)+deg(v)-2$.
\begin{theorem}
Let $G$ be a triangle-free graph.  Let $e=uv$ be an edge of $G$ which is contracted to form $G \cdot e$ with new vertex $w$.  Then \[\frac{mp(G)}{3} \leq mp(G \cdot e) \leq 2mp(G).\]
\end{theorem}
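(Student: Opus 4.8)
The plan is to treat the two bounds separately, and to recycle the two-path splitting idea already used for edge addition and deletion, adapting it to the fact that contraction in a triangle-free graph replaces $u$ and $v$ by a single vertex $w$ of degree $\deg(u)+\deg(v)-2$ while leaving all other degrees unchanged.

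For the upper bound $mp(G\cdot e)\le 2\,mp(G)$, set $k=mp(G)$ and let $P=x_1x_2\ldots x_t$ be a longest degree monotone path in $G\cdot e$, say non-decreasing. If $w\notin P$, then $P$ is already a degree monotone path in $G$, so $t\le k\le 2k$. So assume $w=x_i$ for some $i$. The idea is to ``un-contract'' $w$: in $G$, the vertex $x_{i-1}$ is adjacent to at least one of $u,v$, and $x_{i+1}$ is adjacent to at least one of $u,v$; pick such neighbours and try to reinsert a short $u$--$v$ piece (a single vertex, or the edge $uv$, or the two-vertex walk through whichever of $u,v$ is needed) to turn $P$ into a walk in $G$. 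The complication is that the degree of $w$ in $G\cdot e$ can be much larger than $\deg_G(u)$ or $\deg_G(v)$, so reinserting $u$ or $v$ in place of $w$ need not preserve monotonicity. I would therefore split $P$ at $w$ into $P_{\mathrm{left}}=x_1\ldots x_{i-1}$ and $P_{\mathrm{right}}=x_{i+1}\ldots x_t$; each of these is a degree monotone path in $G$ (all of its vertices keep their $G$-degree, and dropping $w$ only removes the one vertex whose degree changed), so $i-1\le k$ and $t-i\le k$, giving $t-1\le 2k$, i.e. $t\le 2k+1$. To shave the extra $+1$ one observes that $x_{i-1}$ is adjacent in $G$ to $u$ or $v$, say to $u$, and $\deg_G(u)\le \deg_{G\cdot e}(w)$ need not hold — but $\deg_G(u)\le\deg_{G\cdot e}(x_{i-1})$ is not what we need either; instead, since $\deg_{G\cdot e}(x_{i-1})\le\deg_{G\cdot e}(w)=\deg_G(u)+\deg_G(v)-2$, one of $\deg_G(u),\deg_G(v)$ is at least $\tfrac12\deg_{G\cdot e}(x_{i-1})+1$, and by choosing the split point and the representative ($u$ or $v$) carefully one can append that representative to the longer of $P_{\mathrm{left}},P_{\mathrm{right}}$ without violating monotonicity, absorbing the lost vertex. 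This bookkeeping — making sure the reinserted vertex extends the correct side monotonically — is where the real work lies; if it is too delicate, one simply accepts the clean bound $t\le 2k+1$ wherever it suffices and argues the $2k$ bound by a parity/extremal refinement, but the cleanest route is the careful reinsertion just described.

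For the lower bound $mp(G\cdot e)\ge mp(G)/3$, I argue contrapositively exactly as in Theorem 2.1: if $mp(G\cdot e)<mp(G)/3$ for some $G$, then writing $H=G\cdot e$ we would need an ``un-contraction'' inverse, which is awkward, so instead I argue directly. Let $P=v_1\ldots v_k$ be a longest degree monotone path in $G$, $k=mp(G)$, non-decreasing say. If $e=uv$ does not lie on $P$ and neither endpoint is on $P$, then $P$ survives in $G\cdot e$ verbatim (other vertices keep their degree), so $mp(G\cdot e)\ge k$. If exactly one of $u,v$, say $u=v_j$, lies on $P$, then replacing $v_j$ by $w$ gives a walk $v_1\ldots v_{j-1}\,w\,v_{j+1}\ldots v_k$ in $G\cdot e$ whose vertices, apart from $w$, have their old degrees; the two halves $v_1\ldots v_{j-1}$ and $v_{j+1}\ldots v_k$ are degree monotone in $G\cdot e$, and together with $w$ one of them has at least $\lceil (k+1)/2\rceil\ge k/3$ vertices. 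If both $u=v_j$ and $v=v_{j+1}$ lie consecutively on $P$ (the contracted edge is on the path), then $v_1\ldots v_{j-1}\,w\,v_{j+2}\ldots v_k$ is a walk in $G\cdot e$; again split it at $w$ into two degree monotone paths in $G\cdot e$, one of which has at least $\lceil(k-1)/3\rceil\ge k/3$ vertices (we have removed two vertices from $P$ and split into two parts around $w$, hence $k-2$ vertices over two parts plus $w$). In every case we obtain a degree monotone path of length at least $k/3$ in $G\cdot e$, as required.

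The main obstacle, as flagged above, is the upper bound: specifically, certifying that after un-contracting $w$ the reinserted vertex can always be appended to one of the two subpaths so as to keep the degree sequence monotone, given that $\deg_{G\cdot e}(w)$ may exceed both $\deg_G(u)$ and $\deg_G(v)$. I would handle this by a case analysis on whether the subpath adjacent to the relevant endpoint is ``entering'' $w$ from below or leaving it, using that the subpath vertex adjacent to $u$ (resp. $v$) has $G\cdot e$-degree at most $\deg_{G\cdot e}(w)$, and picking the representative whose $G$-degree is large enough to sit monotonically between that neighbour and the far end. Everything else is routine path-splitting in the spirit of Theorems 2.1 and 2.2.
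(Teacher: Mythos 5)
Your write-up has a genuine gap in each direction. For the upper bound, the split of $P$ at $w$ into $x_1\ldots x_{i-1}$ and $x_{i+1}\ldots x_t$ only yields $t\le 2\,mp(G)+1$, and you explicitly leave the reduction to $2\,mp(G)$ as ``bookkeeping where the real work lies,'' with a fallback to the weaker bound. That refinement is not optional decoration; it is the content of the theorem, and the inequality you float for it (that one of $\deg_G(u),\deg_G(v)$ is at least $\tfrac12\deg_{G\cdot e}(x_{i-1})+1$) does not help, since appending a vertex after $x_{i-1}$ requires its degree to be at least $\deg(x_{i-1})$, not half of it. The observation that actually closes the gap is on the \emph{other} side of $w$: when $\deg(u),\deg(v)\ge 2$ one has $\deg_G(u)\le\deg_G(v)\le\deg_{G\cdot e}(w)\le\deg_{G\cdot e}(x_{i+1})=\deg_G(x_{i+1})$ (the last equality because $G$ is triangle-free, so neighbours of $u,v$ keep their degree), hence whichever of $u,v$ is $G$-adjacent to $x_{i+1}$ can be prepended to the right-hand subpath, lengthening it to $t-i+1$ and giving $\max\{i-1,t-i+1\}\ge\lceil t/2\rceil$, i.e.\ $t\le 2\,mp(G)$. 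The degenerate case $\deg(u)=1$ (where $\deg(w)=\deg(v)-1$ and the chain above fails) must be treated separately by appending $v$ to the \emph{left} subpath instead, which works because then $v$ is adjacent to both $x_{i-1}$ and $x_{i+1}$ and $\deg_G(v)=\deg(w)+1\ge\deg(x_{i-1})$. This is exactly the case analysis the paper carries out.

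For the lower bound your case analysis omits the decisive case: both $u$ and $v$ lie on the longest degree monotone path $P$ of $G$ but are \emph{not} consecutive on it (triangle-freeness forces them to be at distance at least $3$ along $P$). In that situation contracting $uv$ breaks $P$ into \emph{three} pieces, $v_1\ldots v_{i-1}w$, $v_{i+1}\ldots v_{j-1}w$ and $v_{j+1}\ldots v_k$, and only a $\tfrac13$-fraction can be guaranteed in one piece. Every case you do treat yields roughly $mp(G)/2$, so if your list were exhaustive you would have proved a bound of about $mp(G)/2$ --- which is false: the paper's extremal graph $G_3$ has $mp(G_3)=3k+3$ and $mp(G_3\cdot e)=k+1$, and it is realized precisely by contracting an edge whose endpoints sit on $P$ at distance roughly $k$ apart. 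So the missing case is not a technicality; it is the reason the constant is $\tfrac13$ and the only place the bound is tight.
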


\begin{proof}

Let us first consider the upper bound.  Clearly $mp(G) \geq 2$ as if $mp(G)=1$, $G$ has no edges and $G \cdot e$ is not defined.   Let $P=v_1v_2 \ldots v_k$ be a degree monotone path in non-decreasing order of maximum length in $G \cdot e$.  We know that $deg(w)=deg(u)+deg(v)-2$. Let us look at all the different possibilities.
\medskip

\noindent \emph{Case 1}: \indent Assume first that $deg(u)=deg(v)=1$ and hence $deg(w)=0$.  If this was the only edge in $G$, then $mp(G)=2$ while $mp(G \cdot e)=1$, and the upper bound holds.  If there are other edges in $G$, then $mp(G \cdot e)=mp(G)$, and again the upper bound holds.

\medskip

\noindent \emph{Case 2}: \indent Now assume $1 = deg(u) < deg(v)$ --- then $deg(w)=deg(v)-1$ in $G \cdot e$.  We consider the following cases:
\begin{itemize}
\item{Clearly, if $w$ is not a vertex in $P$, then $P$ is degree monotone in $G$ too, hence $mp(G \cdot e) \leq mp(G)$.}
\item{ If $w=v_1$ in $P$, then in $G$, $v_2 \ldots v_k$ is a degree monotone path, and hence $mp(G \cdot e) - 1 \leq mp(G)$ and hence $mp(G \cdot e) \leq mp(G)+1 \leq 2mp(G)$.}
\item{ If $w=v_k$, then $v_1 \ldots v_{k-1}v$ is degree monotone in $G$ and hence $mp(G \cdot e)  \leq mp(G)$.}

\item{If $w=v_j$ for $2 \leq j \leq k-1$, then $v_1 \ldots v_{j-1}v$  and $v_{j+1} \ldots v_k$ are degree monotone in $G$, and $\min\{\max\{j,k-j\}\}= \lceil \frac{k}{2} \rceil$.

If $k$ is odd, then $mp(G) \geq \frac{k+1}{2}$ and hence $mp(G \cdot e) \leq 2mp(G)-1$.

If $k$ is even, then $mp(G) \geq \frac{k}{2}$ and hence $mp(G \cdot e) \leq 2mp(G)$.}
\end{itemize}

\noindent \emph{Case 3}: \indent It remains to consider, without loss of generality, the case \[2 \leq deg(u) \leq deg(v) \leq deg(w)=deg(u)+deg(v)-2.\]  Again we consider each possibility.

\begin{itemize}
\item{If in $G$, $w$ is not a vertex in $P$, then $P$ is degree monotone in $G$ too, hence $mp(G \cdot e) \leq mp(G)$.}
\item{If $w=v_1$, then in $G$, either $u$ or $v$ is adjacent to $v_2$ --- since $deg(u) \leq deg(v) \leq deg(w)$ then either $uv_2 \ldots v_k$ or $vv_2 \ldots v_k$ is a degree monotone path in $G$, and hence $mp(G \cdot e) + 1 \leq mp(G)$ giving $mp(G \cdot e) \leq mp(G) -1 \leq 2mp(G)$.}
\item{If $w=v_k$, then $v_1 v_2 \ldots v_{k-1}$ is still a degree monotone path in $G$ hence $mp(G \cdot e) -1 \leq mp(G)$ that is $mp(G \cdot e) \leq mp(G) + 1 \leq 2mp(G)$.}
\item{If $w=v_j$, $2 \leq j \leq k-1$, then let us consider the following cases:
\begin{enumerate}
\item{If in $G$, $u$ is adjacent to both $v_{j-1}$ and $v_{j+1}$, and hence $v$ is not adjacent to either of these vertices since $G$ is triangle-free, then $v_1 \ldots v_{j-1}$ and $u,v_{j+1} \ldots v_k$ are degree monotone paths of length $j-1$ and $k-j+1$ respectively in $G$.  Again, $\min\{\max\{j-1,k-j+1\}\}= \lceil \frac{k}{2} \rceil$.

If $k$ is odd, then $mp(G) \geq \frac{k+1}{2}$ and hence $mp(G \cdot e) \leq 2mp(G)-1$.

If $k$ is even, then $mp(G) \geq \frac{k}{2}$ and hence $mp(G \cdot e) \leq 2mp(G)$.}

\item{ If in $G$, $u$ is adjacent to $v_{j-1}$ and $v$ is adjacent to $v_{j+1}$, then if $deg(v_{j-1}) \leq deg(u)$ then $v_1v_2,\ldots,v_{j-1},u,v,v_{j+1},\ldots v_k$ is degree monotone in $G$ and hence $mp(G \cdot e)+1 \leq mp(G)$ implying that $mp(G \cdot e) \leq mp(G)-1 \leq 2mp(G)$.

 If, on the other hand, $deg(v_{j-1}) > deg(u)$, then $deg(u) \leq deg(v) \leq deg(w) \leq deg(v_{j+1})$ and hence $v_1 v_2 \ldots v_{j-1}$  and $u,v,v_{j+1}\ldots v_k$ are  degree monotone paths in $G$ of length $j-1$ and $k-j+2$ respectively.   In this case we consider $\min\{\max\{j-1,k-j+2\}\}$. 

 If $k$ is even this is equal to $\frac{k+2}{2}$, and hence $mp(G \cdot e) \leq 2mp(G)-2$. 

 If  $k$ is odd, we get $\frac{k+1}{2}$, and hence $mp(G \cdot e) \leq 2mp(G)-1$.}
\item{If in $G$, $v$ is adjacent to $v_{j-1}$ and $u$ is adjacent to $v_{j+1}$, then $v_1v_2 \ldots v_{j-1}$ and $u v_{j+1} \ldots v_k$ are degree monotone paths of length $j-1$ and $k-j+1$ respectively in $G$, and again $\min\{\max\{j-1,k-j+1\}\}= \lceil \frac{k}{2} \rceil$.

If $k$ is odd, then $mp(G) \geq \frac{k+1}{2}$ and hence $mp(G \cdot e) \leq 2mp(G)-1$.

If $k$ is even, then $mp(G) \geq \frac{k}{2}$ and hence $mp(G \cdot e) \leq 2mp(G)$.}

\item{If in $G$, $v$ is adjacent to both $v_{j-1}$ and $v_{j+1}$, and $u$ is adjacent to neither since $G$ is triangle-free, then $v_1 \ldots v_{j-1}$ and $u,v,v_{j+1} \ldots v_k$ are  degree monotone paths in $G$ of length $j-1$ and $k-j+2$ respectively.  In this case we consider $\min\{\max\{j-1,k-j+2\}\}$. 

 If $k$ is even this is equal to $\frac{k+2}{2}$, and hence $mp(G \cdot e) \leq 2mp(G)-2 $. 

 If  $k$ is odd, we get $\frac{k+1}{2}$, and hence $mp(G \cdot e) \leq 2mp(G)-1 $.}
\end{enumerate}}
\end{itemize}
This bound is attained by the graph $G_1$ constructed as follows:  consider the path on $2k+1$ vertices --- we add a leaf to vertices $v_2$ up to $v_{2k}$, and to the vertices $v_k$ and $v_{2k}$ we add a second leaf.  Then $mp(G)=k$.  If we contract one of the edges joining $v_k$ and a leaf, then vertex $w$ has degree $3$ and hence $v_1v_2 \ldots v_{k-1}w v_{k+2}\ldots v_{2k}$ is degree monotone in $G \cdot e$ and has length $2k$, giving $mp(G \cdot e) = 2mp(G)$.  Figure \ref{contract1} shows the construction for $k=4$.

\begin{figure}
\centering
\includegraphics{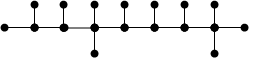} 
\caption{$G_1$  when $k=4$} \label{contract1}
\end{figure}

\bigskip

We now consider the lower bound.  Let $P=v_1 \ldots v_k$ be a degree monotone path in non-decreasing order of maximum length in $G$.  Let $e=uv$ be the edge contracted to vertex $w$ in $G \cdot e$, and without loss of generality, we assume $deg(u) \leq deg(v)$.  Clearly, if the vertices $u$ and $v$ are not on $P$, then $P$ is still degree monotone in $G \cdot e$ and hence $mp(G \cdot e) \geq mp(G)$.  So let us assume that $u$ and $v$ are on $P$, and hence $k=mp(G) \geq 2$.  If $k=2$, that is $P=uv$, then $mp(G \cdot e)\geq 1 \geq \frac{mp(G)}{2}$.  So let us assume that $k \geq 3$ and hence $deg(v) \geq 2$.  So let us assume that $u$ and $v$ are on $P$ and $mp(G) \geq 3$.
\medskip

\noindent \emph{Case 1.} \indent We first consider the case  in which either $u$ or $v$, but not both, are on $P$.  Without loss of generality, Let $u=v_j$ be on $P$.  Then in $G \cdot e$, $w$ is on $P$, and $v_1 \ldots v_{j-1}$ and $v_{j+1} \ldots v_k$ are degree montone in $G \cdot e$, of length $j$ and $k-j$ respectively.  Now $\min\{\max\{j,k-j\}\}=\lceil \frac{k}{2} \rceil$. 

 If $k$ is even then $mp(G \cdot e) \geq \frac{mp(G)}{2} \geq \frac{mp(G)}{3}$.

If $k$ is odd then $mp(G \cdot e) \geq \frac{mp(G)+1}{2} \geq \frac{mp(G)}{3}$.
\medskip

\noindent \emph{Case 2.} \indent We now consider the case in which $u$ and $v$ are in $P$ but $e=uv$ is not in $P$.  Let $u=v_i$ and $v=v_j$ in $P$; in all cases $j-i \geq 3$, otherwise we have a copy of $K_3$ in $G$ --- we consider the following cases:
\begin{itemize}
\item{If $i=1$ and $j=k$, then $v_2 \ldots v_{k-1}w$ is degree monotone in $G \cdot e$, hence $mp(G \cdot e) \geq mp(G)-1 \geq \frac{mp(G)}{3}$ since $mp(G) \geq 2$.}
\item{If $i=1$ and $4 \leq j < k$, then $v_2 \ldots v_{j-1}w$ and $v_{j+1} \ldots v_k$ are degree monotone in $G \cdot e$.  Hence we need $\min\{\max\{j-1, k-j\}\}= \lfloor \frac{k}{2} \rfloor$. Then $mp(G \cdot e) \geq \lfloor \frac{mp(G)}{2} \rfloor \geq \frac{mp(G)}{3}$.}
\item{If $i>1$ and $j=k$, then $v_1 \ldots v_{i-1}$ and $v_{i+1} \ldots v_{k-1}w$ are degree monotone in $G \cdot e$.  Hence we need $\min\{\max\{i-1, k-i\}\}=\lfloor \frac{k}{2} \rfloor$. Then $mp(G \cdot e) \geq \lfloor \frac{mp(G)}{2} \rfloor \geq \frac{mp(G)}{3}$.}

\item{If $1 < i < j <k$, then $v_1 \ldots v_{i-1}w$, $v_{i+1} \ldots v_{j-1}w$ and $v_{j+1} \ldots v_k$ are degree monotone in $G \cdot e$, of lengths $i$, $j-1$ and $k-j$ respectively.  Let $k=3t+r$ where $0 \leq r \leq 2$. 

 If $k-j > t$, then $k-j \geq t+1 \geq \frac{k}{3}$, and hence $mp(G \cdot e) \geq \frac{mp(G)}{3}$.

So let us assume that $k-j \leq t$ and hence $j \geq k-t=2t+r$.  Now $\max \{i,j-1\} \geq \frac{2t+r}{2}=t + \frac{r}{2}$.  If $r=0$ then $k \geq 3t$ hence $\max \{i,j-1\} = t = \frac{k}{3}$.  If $r=1$, then $k=3t+1$, then $\max \{i,j-1\} \geq t+1 = \frac{k+2}{3} > \frac{k}{3}$ since $\max \{i,j-1\}$ must be an integer.  Finally, if $r=2$, then $\max \{i,j-1\} \geq t+1 = \frac{k+1}{3} > \frac{k}{3}$.

Hence, in each case, $mp(G \cdot e) \geq \frac{mp(G)}{3}$.}
\end{itemize}

\noindent \emph{Case 3.} \indent
Finally, we consider the case in which $e=uv$ is in $P$.  We may assume that $mp(G) \geq 3$ since if$mp(G)=2$ then clearly $mp(G \cdot e) \geq 1 \geq \frac{mp(G)}{3}$.  This results in the following cases:
\begin{itemize}
\item{If $u=v_1$ and $v=v_2$, then in $G \cdot e$, $v_3 \ldots v_k$ is degree monotone, and hence $mp(G \cdot e) \geq mp(G) -2 \geq \frac{mp(G)}{3}$ for $mp(G) \geq 3$.}
\item{If $u=v_{k-1}$ and $v=v_k$, hence $2 \leq deg(u) \leq deg(v) \leq deg(w)$, then in $G \cdot e$, $v_1 \ldots v_{k-2}w$ is degree monotone since $deg(u) \leq deg(v) \leq deg(w)=deg(u)+deg(v)-2$.  Hence $mp(G \cdot e) \geq mp(G)-1 \geq \frac{mp(G)}{3}$ for $mp(G)\geq 3$.}
\item{If  $u=v_j$ and $v=v_{j+1}$ for $2 \leq j \leq k-2$ (so $k \geq 4$ otherwise we have one of the previous two cases),  then in $G \cdot e$, $v_1 \ldots v_{j-1}w$ and $v_{j+2} \ldots v_k$ are degree monone paths of lengths $j$ and $k-j-1$ respectively--- hence we must consider $\min\{\max\{j, k-j-1\}\}= \lceil \frac{k}{2} \rceil$.  Hence $mp(G \cdot e) \geq \lceil \frac{k}{2} \rceil \geq \frac{mp(G)}{3}$ as required.}

\end{itemize}

The bound is attained by the following construction to give graph $G_3$:  let us take the path on $3k+3$ vertices --- we add a leaf to the vertices $v_2$ up to $v_{3k+2}$ except for $v_{k+1}$ and $v_{2k+2}$, and we connect these two vertices by an edge $e$. We also attach three leaves to each of the leaves attached to $v_{k+2}$ and $v_{2k+1}$.  Finally we add 3 leaves to vertex $v_{3k+3}$.  Thus $deg(v_1)=1$, $deg(v_2)=deg(v_3) \ldots =deg(v_{3k+2})=3$ and $deg(v_{3k+3})=4$, hence $mp(G)=3k+3$.  Now let us contract the edge $e$.  Then  $deg(v_1)=1$, $deg(v_2)=deg(v_3) \ldots =deg(v_k)=deg(v_{k+2}) = \ldots = deg(v_{2k+1})=deg(v_{2k+3})=\ldots =deg(v_{3k+2})=3$ and $deg(v_{3k+3})=4$.  Vertex $w$ has degree 4.  Hence the possible degree monotone paths in $G \cdot e$ are $v_1 \ldots v_k w$, $v_{k+2} \ldots v_{2k+1}w$ and $v_{2k+3} \ldots v_{3k+3}$ --- all these are of length $k+1$ and therefore $mp(G_3 \cdot e)=\frac{mp(G_3)}{3}$.  Figure \ref{contract3} shows the construction for $k=3$.

\end{proof}

\begin{figure}[h!]
\centering
\includegraphics{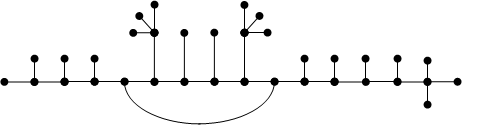} 
\caption{$G_3$  when $k=4$} \label{contract3}
\end{figure}

The following proposition shows that if we consider contraction in graphs which are not triangle-free, the situation is very different.

\begin{theorem}

There exist arbitrarily large $K_4$-free graphs $G$ on $n$ vertices such that $mp(G)=4$ while $mp(G.e)=|V(G \cdot e)| =n-1$.
\end{theorem}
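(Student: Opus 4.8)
The plan is to construct, for each large parameter, an explicit $K_4$-free graph $G$ on $n$ vertices with a marked edge $e=uv$ for which $mp(G)=4$ while $mp(G\cdot e)=n-1$. A preliminary observation constrains what $G$ must look like: if $G\cdot e$ has a degree-monotone Hamiltonian path $P$, then removing from $P$ the contracted vertex $w$ together with all common neighbours of $u$ and $v$ leaves sub-paths whose vertices keep the same degree in $G$; each such sub-path is a degree-monotone path of $G$, so has at most $mp(G)=4$ vertices. Hence $u$ and $v$ must have $\Omega(n)$ common neighbours, and since $G$ is $K_4$-free those common neighbours $Z$ form an independent set. So $G$ has to be essentially a huge ``book'' on $uv$ with independent pages $Z$, plus a set $L$ of ``linker'' vertices used to string the pages into one long path, with $|L|$ and $|Z|$ both $\approx n/2$.

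Concretely I would take $u\sim v$; join every $z\in Z$ to both $u$ and $v$ (so $\deg_G u,\deg_G v$ are about $n/2$); join each linker only to vertices of $Z$; and choose the $L$--$Z$ incidences so that, in a fixed alternating list $z_1,\ell_1,z_2,\ell_2,\dots$ of the pages and linkers with $w$ placed at the high-degree end, one has $\deg_{G\cdot e}\ell_i=\deg_{G\cdot e}z_i$ for every $i$ and $\deg_{G\cdot e}z_1\le\deg_{G\cdot e}z_2\le\cdots\le\deg_{G\cdot e}w$ (a small local adjustment, e.g.\ giving $v$ one extra private neighbour so that $\deg_G v=\deg_G u+1$, is needed to kill a spurious five-vertex path through $u$ and $v$). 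With this calibration the alternating list is a degree-monotone Hamiltonian path of $G\cdot e$: reading off degrees gives $\deg z_1,\deg z_1,\deg z_2,\deg z_2,\dots,\deg w$, which is non-decreasing. Thus $mp(G\cdot e)=|V(G\cdot e)|=n-1$; one must also check that this calibration is numerically consistent with the balanced bipartition forced on $G\cdot e$ by the bound $|Z|\ge(n-4)/2$.

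To prove $mp(G)=4$ I would show, by cases, that every degree-monotone path of $G$ has at most four vertices. The only large-degree vertices are $u$ and $v$. A monotone path avoiding both lives on $Z\cup L$, where the calibration makes the degree sequence along the stringing path ``zig-zag'' ($\deg_G z_i=\deg_G\ell_i+1$, each $z_i\to\ell_i$ a decrease and each $\ell_i\to z_{i+1}$ an increase), so its monotone sub-paths have at most two vertices and no other edge within $Z\cup L$ creates a longer one. A monotone path using $u$ and $v$ must, since $\deg_G u,\deg_G v$ dominate, have its $\{u,v\}$-portion at one end, preceded by a monotone path ending in $Z$; that preceding portion contributes at most two vertices (a linker and a page), because a linker is joined only to pages of at least its own degree and so cannot itself be preceded on a monotone path. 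The remaining cases (only one of $u,v$ used; the $\{u,v\}$-portion followed rather than preceded) are handled similarly, and the four-vertex path $\ell_i\,z_i\,u\,v$ shows $mp(G)\ge4$. Finally $G$ is $K_4$-free: four mutually adjacent vertices would have to avoid two pages and two linkers, so would be $\{u,v,z,\ell\}$, but no linker is adjacent to $u$ or $v$.

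The hard part is precisely this calibration and the accompanying case analysis: one needs the page-degrees and linker-degrees in $G\cdot e$ to line up exactly along a (necessarily balanced bipartite) Hamiltonian path, while simultaneously guaranteeing that in $G$ every one of these vertices acts as a ``blocker'' stopping a monotone path after at most four vertices — in particular ruling out the several ways a path could ride up through a page into the dominating pair $u,v$ and descend again. Once the correct degree pattern is pinned down, the $K_4$-freeness and the lower bound $mp(G)\ge4$ are immediate.
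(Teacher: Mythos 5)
Your proposal is a design template rather than a proof: the graph is never actually written down, and the step you yourself flag as ``the hard part'' --- choosing the $L$--$Z$ incidences so that the degrees line up --- is precisely where the argument breaks. A degree count shows the calibration you ask for is not achievable under your own constraints. Since linkers are joined only to pages and $Z$ is independent, the number of $L$--$Z$ edges equals $\sum_{\ell\in L}\deg_G(\ell)$ and also equals $\sum_{z\in Z}(\deg_G(z)-2)$. If the alternating Hamiltonian path of $G\cdot e$ pairs each $\ell_i$ with a page $z_i$ of equal $G\cdot e$-degree (i.e.\ $\deg_G(\ell_i)=\deg_G(z_i)-1$) and $|L|=|Z|=m$, the two counts give $\sum\deg_G(z_i)-m=\sum\deg_G(z_i)-2m$, which is impossible. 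If instead $|L|=|Z|-1$, the count forces the unpaired page $z_m$ to be adjacent to every linker, so $\deg_G(z_m)=m+1=\deg_G(u)$ (as $u$ sees only $v$ and the $m$ pages); then $\ell\,z\,u\,z_m\,v$ is a degree-monotone path on five vertices of $G$, destroying $mp(G)=4$. Your escape hatch --- giving $u$ or $v$ an extra private neighbour to separate the degrees --- kills the other half of the theorem: that private neighbour becomes a degree-one vertex of $G\cdot e$ whose unique neighbour is $w$, the unique vertex of maximum degree, so it cannot sit on any degree-monotone Hamiltonian path of $G\cdot e$ (it can only be the low end of the path, but its sole neighbour must then be the second vertex, after which the path cannot continue upward). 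The preliminary structural claim $|Z|\ge(n-4)/2$ also overshoots what your subpath-counting argument delivers; removing $w$ and $Z$ leaves at most $|Z|+2$ pieces of at most four vertices each, which gives only $|Z|\ge(n-10)/5$.

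The way out, and what the paper does, is to drop your requirement that the linking vertices avoid $u$ and $v$. Take the cycle $C_{4k}$, join each even-indexed vertex to both $u$ and $v$ (degree $4$), join each odd-indexed vertex to exactly one of $u,v$ (degree $3$), and add the edge $uv$. Degrees alternate $3,4$ around the cycle, so a monotone path uses at most two cycle vertices before climbing to $u,v$, giving $mp(G)=4$; the graph is $K_4$-free because consecutive cycle vertices have opposite parity and the odd ones miss one of $u,v$. Contracting $uv$ lowers every even vertex to degree $3$ while leaving the odd vertices at $3$, so all $4k$ cycle vertices become equal in degree and the cycle followed by $w$ is a degree-monotone Hamiltonian path of $G\cdot e$. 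Attaching each linking vertex to exactly one of $u,v$ is exactly the device that equalises all degrees after contraction with no calibration at all --- and it is the one ingredient your design explicitly rules out.
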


\begin{proof}
Consider the following construction:  $G$ is the graph constructed by taking a path $P=v_1 \ldots v_{4k}$ on $4k$ vertices where $k \geq 2$, and another edge $e=uv$.  We connect the vertices  $v_{2i} \in P$, $1 \leq i \leq 2k$ to both $u$ and $v$ --- we then connect $v_{2i-1} \in P$ for $1 \leq i \leq k$ to $u$, and  $v_{2i-1} \in P$ for $k+1 \leq i \leq 2k$ to $v$.  Finally we connect $v_1$ to $v_{4k}$.  Figure \ref{contract2} gives the construction for $k=2$.  $G$ is not triangle-free, but it is $K_4$-free.  One can see that in $P$, the vertices with even index have degree 4, while the vertices with odd index have degree 3, and $deg(u)=deg(v)=3k$.  Hence it is clear that $mp(G)=4$ for any value of $k \geq 2$.  Now if we contract $e=uv$ to a vertex $w$, all vertices in $P$  have degree 3, while $deg(w)=4k \geq 3$, and therefore $mp(G \cdot e) = 4k+1$.
\end{proof}
\begin{figure}[h!]
\centering
\includegraphics{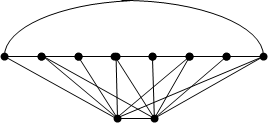} 
\caption{$G$  when $k=3$} \label{contract2}
\end{figure}

\section{Vertex Addition and Deletion}

Given a graph $G$, we can add a vertex $v$ and connect it to at least one vertex in $G$ to give the graph $G+v$.  On the other hand, given a graph $G$ and a vertex $v \in V(G)$, the graph $G-v$ is obtained by deleting the vertex $v$ and all its incident edges.  The effect on the maximum length of a degree monotone path can be seen through the following observations.

\begin{proposition}
Given a graph $G$ and $v \in V(G)$,
\begin{enumerate}
\item{$2\leq mp(G+v) \leq |V(G)|+1$}
\item{$1 \leq mp(G-v) \leq |V(G)|-1$}
\end{enumerate}
\end{proposition}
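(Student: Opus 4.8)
The statement is a collection of four easy bounds, so the plan is to verify each one directly. For the upper bounds I would exploit the fact that a degree monotone path in $G+v$ (resp.\ $G-v$) lives in a graph on $|V(G)|+1$ (resp.\ $|V(G)|-1$) vertices, and a path cannot revisit a vertex, so its length is at most the number of vertices; this gives $mp(G+v)\le |V(G)|+1$ and $mp(G-v)\le |V(G)|-1$ with essentially no work. For the lower bound in part (1), note that $G+v$ is defined so that $v$ is joined to at least one vertex $u\in V(G)$, hence the single edge $uv$ is a (trivially) degree monotone path of length $2$, giving $mp(G+v)\ge 2$. For the lower bound in part (2), any graph on at least one vertex has a degree monotone path consisting of a single vertex, so $mp(G-v)\ge 1$, provided $G-v$ is nonempty; if $G$ is the one-vertex graph then $G-v$ is the null graph and one should note that the statement is read as applying when $G-v$ has at least one vertex (equivalently $|V(G)|\ge 2$), which is the only interesting case anyway.

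The only points needing a word of care are the degenerate cases. For part (1), the bound $mp(G+v)\le |V(G)|+1$ is immediate since $|V(G+v)|=|V(G)|+1$ and $mp$ of any graph is at most its order. For part (2), if $G$ has an isolated vertex $v$, then $G-v$ could still be nonempty and the bounds hold; if $|V(G)|=1$ then $G-v$ has no vertices, and one either excludes this case or declares $mp$ of the empty graph to be $0$, in which case $0\le |V(G)|-1=0$ still holds while the lower bound $1$ fails — so the cleanest reading is to assume $|V(G)|\ge 2$ for part (2), matching the spirit of "deleting a vertex from $G$."

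I expect no real obstacle here; the statement is a warm-up observation rather than a theorem, and the entire argument is the remark that a path is a simple walk, so its length is bounded by the order of the host graph, together with the trivial lower bounds coming from a single edge (for addition, which is guaranteed to create one) and a single vertex (for deletion). If anything, the "main obstacle" is purely bookkeeping: being explicit that $G+v$ by definition attaches $v$ to at least one vertex so that the lower bound $2$ is actually attainable, and flagging the empty-graph edge case for $G-v$.
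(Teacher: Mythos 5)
Your proposal is correct, but it is almost disjoint from what the paper actually does for this proposition. The paper treats the four inequalities themselves as too obvious to argue (exactly the "a path is simple, so $mp\le|V|$" and "an edge gives a monotone path of two vertices" observations you make) and devotes its entire proof to \emph{constructions attaining each bound}: $K_{n,n+1}$ plus a vertex joined to the larger part becomes $K_{n+1,n+1}$, jumping from $mp=2$ to $mp=|V(G)|+1$ (and a further addition drops it back to $2$); $K_n$ minus a vertex realises $mp(G-v)=|V(G)|-1$; and $K_{1,m}$ minus its centre realises $mp(G-v)=1$. So your write-up proves the literal statement, while the paper's proof supplies the content that actually matters in context — that these trivial bounds are sharp, i.e., a single vertex operation can swing $mp$ between its extreme values — which is precisely what motivates the authors' subsequent restriction to trees and leaf additions in the next theorem. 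Your verification of the inequalities is sound (including the sensible handling of the $|V(G)|=1$ degenerate case for deletion, which the paper silently ignores), but if this were to replace the paper's proof you would want to append the extremal examples, since without them the proposition loses its point.
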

\begin{proof}
\mbox{ \\}

\noindent 1. \indent Consider the complete bipartite graph $G=K_{n,n+1}$ so that $mp(G)=2$.  We add a vertex $v_1$ and connect it to all the vertices in the larger part, to give $H=G+v_1=K_{n+1,n+1}$ --- now $mp(G+v_1)=mp(H)=|V(G)|+1$.  Now if we add another vertex $v_2$ and connect it to all the vertices in one part of the partition, we get $H+v_2=F=K_{n+1,n+2}$ and again $mp(H+v_2)=mp(F)=2$.
\medskip

\noindent 2. \indent For the upperbound, consider the graph $G=K_n$ --- then $mp(G)=n$, and $G-v=K_{n-1}$ hence $mp(G-v)=n-1=|V(G)|-1$.

For the lower bound consider $G=K_{1,m}$, $m \geq 1$.  Then deleting the vertex of degree $m$ gives $mp(K_{1,m}-v)=mp(G-v)=1$.
\end{proof}

In view of this general result, we consider vertex addition and deletion for the family of trees.  The following example shows that if one adds non-leaf vertices, the effect on  $mp(G)$ can be quite drastic.  Consider the tree $T$ constructed by taking a path on $2k+1$ vertices $(v_1 v_2 \ldots v_{2k+1})$, $k \geq 1$, and adding a leaf to the vertices $v_{2i}$, $1 \leq i \leq k$.  Clearly this tree has $mp(T)=2$.  Let us add a vertex $v$ and connect it to the vertices $v_{2i+1}$, $0 \leq i \leq k$.  So now all the vertices on the path have degree 3 except for $v_1$ and $v_{2k+1}$, and vertex $v$ has degree $k+1 \geq 2$.  Hence there is now a degree monotone path of length $2k+1$, that is $mp(T+v)=2k+1$.  Hence, in the sequel, we only consider the addition of leaves, so that the resulting graph is another tree.

\begin{theorem}
Let $T$ be a tree and $v \in V(T)$,
\begin{enumerate}
\item{If we add a vertex $v$ such that $T+v$ is also a tree, then \[\frac{mp(T)}{2} \leq mp(T+v) \leq 2mp(T).\]}
\item{If $v \in V(T)$ such that $T-v$ is a tree, then \[\frac{mp(T)}{2} \leq mp(T-v) \leq 2mp(T).\]}
\end{enumerate}

In both cases, the bounds attained are sharp.
\end{theorem}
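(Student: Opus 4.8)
The plan is to treat adding a leaf and deleting a leaf as essentially the same operation viewed from two sides, exactly as was done for edge addition/deletion in Theorem 2.1: if $T+v$ is a tree obtained from $T$ by adding the leaf $v$, then $T$ is obtained from $T+v$ by deleting the leaf $v$, so the two inequalities in parts 1 and 2 are logically equivalent. Thus it suffices to prove one direction of one of the bounds and deduce the rest. Concretely, I would first prove $mp(T-v)\le 2mp(T)$ for a leaf $v$; then $mp(T+v)\le 2mp(T)$ follows because if $T'=T+v$ then $T=T'-v$ and $mp(T')\le 2mp(T'-v)=2mp(T)$ would be going the wrong way, so instead I would prove $mp(T+v)\le 2mp(T)$ directly and obtain $mp(T-v)\le 2mp(T)$ from it by the same substitution trick used in Theorem 2.1. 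Let me set up the direct arguments.

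For the upper bound on $mp(T+v)$: let $v$ be the new leaf, attached to a vertex $u\in V(T)$, so $\deg_{T+v}(u)=\deg_T(u)+1$ and every other vertex of $T$ keeps its degree; $\deg_{T+v}(v)=1$. Take a longest degree monotone path $P$ in $T+v$. If $v\notin P$ and $u\notin P$ then $P$ lies in $T$ and we are done. If $u\in P$ but $v\notin P$, split $P$ at $u$ into two subpaths; each is degree monotone in $T$ (the only changed degree is at the shared endpoint $u$, and deleting it from one side or shifting it to the other leaves a monotone sequence with respect to $\deg_T$ on at least one of the two halves after a short case check), giving $mp(T+v)\le 2mp(T)$. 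If $v\in P$ then $v$ is an endpoint of $P$ (degree $1$), its neighbour on $P$ is $u$, and removing $v$ gives a path through $u$ in $T+v$; now argue as in the previous case. For $mp(T-v)$ the same substitution argument as in Theorem 2.1 yields $mp(T-v)\le 2mp(T)$, and the lower bounds $mp(T+v)\ge mp(T)/2$, $mp(T-v)\ge mp(T)/2$ follow by the mirror-image substitution (if $mp(T+v)<mp(T)/2$, set $H=T+v$, so $T=H-v$ and $mp(H-v)>2mp(H)$, contradicting the upper bound just proved).

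For sharpness I would exhibit trees meeting each of the four bounds. For $mp(T+v)=2mp(T)$: take a path $P_{2k}$ with a pendant leaf on each internal vertex except two symmetric ones (so all internal path-vertices have degree $3$, except the two chosen ones which have degree $2$), so $mp(T)=k$, then attach the new leaf $v$ to one of the two degree-$2$ vertices, raising it to degree $3$ and creating a degree monotone path of length $2k$ along the whole backbone; for $mp(T+v)=mp(T)/2$, reverse the roles (start from the graph where the long path already exists and adding $v$ destroys it). The analogous two constructions work for $T-v$ by the same reversal. The main obstacle I expect is the careful bookkeeping in the ``split at $u$'' case: when $u$ is interior to $P$ its degree went up by one in $T+v$, so the degree sequence that was monotone in $T+v$ need not be monotone in $T$ on the full path, and one must check that at least one of the two halves (or one half with $u$ deleted) remains monotone in $T$ — this is where the factor $2$, rather than something smaller, is forced, and getting the endpoint sub-cases ($u=v_1$, $u=v_k$) and the parity of $k$ right is the only delicate part.
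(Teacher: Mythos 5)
Your direct case analysis for $mp(T+v)\le 2mp(T)$ (splitting the longest degree monotone path of $T+v$ at the attachment vertex $u$, with $u$ kept on the side where its decreased degree in $T$ does not break monotonicity) is essentially the paper's argument, and your sharpness constructions are in the same spirit as the paper's. The problem is the logical scaffolding you build around it. Write the four bounds as (A) $mp(T+v)\le 2mp(T)$, (B) $mp(T+v)\ge mp(T)/2$, (C) $mp(T-v)\le 2mp(T)$, (D) $mp(T-v)\ge mp(T)/2$. The substitution $T'=T+v$, $T=T'-v$ turns (A) into the statement $mp(T'-v)\ge \tfrac{1}{2}mp(T')$, i.e.\ into (D), \emph{not} into (C); likewise (B) is equivalent to (C). So the equivalence classes are $\{(A),(D)\}$ and $\{(B),(C)\}$, and it does \emph{not} ``suffice to prove one direction of one of the bounds and deduce the rest'': you must prove one bound from each class directly. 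Your final assembly proves (A) directly, then claims (C) ``by the same substitution argument,'' which is exactly the substitution you yourself noted goes the wrong way, and then derives (B) from that unproved (C). The net effect is that only (A) and (D) are actually established; the upper bound for leaf deletion and the lower bound for leaf addition are never proved.

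The fix is what the paper does (and what it also does in its Theorem 2.1, which you cite as your model but misread: there both upper bounds are proved by separate direct case analyses, and only the two lower bounds are obtained by substitution). You need a second, independent case analysis for $mp(T-v)\le 2mp(T)$: take a longest degree monotone path $P=v_1,\dots,v_k$ in $T-v$ and examine where the unique neighbour of the deleted leaf $v$ sits relative to $P$ (not on $P$; equal to $v_1$ or $v_k$; or an interior $v_j$, in which case $v_1,\dots,v_j$ and $v_{j+1},\dots,v_k$ are both degree monotone in $T$ and their lengths sum to $k$). This is no harder than your argument for (A), but it is a genuinely separate piece of work, and without it half of the theorem is unsupported. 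A minor additional remark: in the interior-split case the two halves together cover all $k$ vertices, so $k\le 2mp(T)$ falls out immediately with no parity discussion needed.
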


\begin{proof}

Let us first consider the upper bound for each case.

\noindent 1. \indent If $T+v$ is a tree then clearly $v$ is a leaf otherwise it would form a cycle.  Let $P=v_1,v_2,\ldots,v_k$ be a degree monotone path of maximum length in $T+v$ in non-decreasing order.  Observe that $k \geq 2$ as $T+v$ contains at least one edge.  So we consider all possible cases.

\begin{itemize}
\item{If $v$ is not adjacent to any vertex in $P$ then $P$ is a degree monotone path in $T$ hence $mp(T+v) \leq mp(T)$.}
\item{If $v$ is adjacent to $v_1$, then $v,v_1,\ldots,v_k$ is a degree monotone path in $T+v$ contradicting the maximality of $P$.}
\item{If $v=v_1$ then $v_2,\ldots,v_k$ is a degree monotone path in $T$ of length $k-1$ so $k-1 \leq mp(T)$ and hence $2mp(t) \geq 2k-2 \geq k=mp(T+v)$ since $k \geq 2$.}
\item{If $v$ is adjacent to $v_k$, then $v_1,\ldots,v_{k-1}$ is a degree monotne path in $T$, hence again $2mp(t) \geq 2k-2 \geq k=mp(T+v)$ since $k \geq 2$.}
\item{If $v=v_k$, this implies that $deg(v)=1$ and that $k=2$, and since $T$ is not $K_1$, $mp(T) \geq 2$ hence $2mp(T) \geq 4 > 2=k=mp(T+v)$. (If $T=K_1$ then trivially $T+v=K_2$ so $mp(T+v)=2 =2 mp(T)$.)}
\item{ Let $v$ be adjacent to some $v_j$ for $2 \leq j \leq k-1$.  Note that $v$ can only be  $v_1$ or $v_k$ in $P$ since it is a leaf.  Consider $v_1,\ldots,v_{j-1}$ which is a degree monotone path in $T$ hence $j-1 \leq mp(T)$.  Similarly $v_j,\ldots,v_k$ is also a degree monotne path in $T$ with length at most $mp(T)$ hence by adding we get $mp(T+v) \leq 2 mp(T)$ as required.}
\end{itemize}

For sharpness, consider the tree $T_1^+$ constructed as follows:  take the path on $2k+1$ vertices $v_1,\ldots,v_{2k+1}$, where $k \geq 2$ --- we add a leaf to all the vertices except $v_1$,$v_{k+1}$ and $v_{2k+1}$.  Then $mp(T_1^+)=k$.  If we add a vertex $v$ and connect it to  vertex $v_{k+1}$, then $mp(T_1^++v)=2k=2mp(T_1^+)$.  Figure \ref{diag3} shows the construction for $k=4$.
\medskip

\begin{figure}[h!]
\centering
\includegraphics{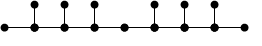} 
\caption{$T_1^+$  when $k=4$} \label{diag3}
\end{figure}
\noindent 2. \indent  Let $P=v_1,v_2,\ldots,v_k$ be a degree monotone path of maximum length in $T-v$ in non-decreasing order.  Not that if $k=1$ then $T-v$ has no edges which imples that $T=K_{1,m}$ for some $m \geq 1$, and so $mp(T)=2=2mp(T-v)$.

So let us assume that $k \geq 2$.  If in $T$ $v$ is not adjacent to any vertex of $P$, then $P$ is also a degree monotone path in $T$, hence $mp(T-v)=k \leq mp(T) < 2mp(T)$.

So we consider the case in which $v$ is adjacent to exactly one vertex in $P$.  We consider the different scenarios.

\begin{itemize}
\item{If $v$ is adjacent to $v_1$ in $T$, then $v_2,\ldots,v_k$ is a degree monotone path of length $k-1$ in $T$ hence $mp(T-v)=k \leq 2(k-1) \leq 2mp(T)$ since $k \geq 2$.}
\item{If $v$ is adjacent to $v_k$ in $T$ then $v_1,\ldots,v_k$ is also a degree monotone path in $T$,  hence again $mp(T-v)=k \leq  mp(T) < 2mp(T)$.}
\item{If $v$ is adjacent to some $v_j$ for $2 \leq j \leq k-1$, then consider in $T$ the path $v_1,\ldots,v_j$ --- this is degree monotone hence $j \leq mp(T)$.  Now $v_{j+1},\ldots,v_k$ is also a degree monotone path in $T$ and has length at most $mp(T)$.  Adding, we get $mp(T-v) \leq 2 mp(T)$ as required.}
\end{itemize}

For sharpness consider the graph $T_1^-$ constructed as follows --- we take a path on $2k+1$ vertices for $k \geq 2$ and we add a leaf to vertex $v_k$ and to vertex $v_{2k}$.  Then $mp(T_1^-)=k$, while if $v$ is the leaf attached to $v_k$, $mp(T_1^--v)=2k= 2mp(T_1^-)$.  Figure \ref{diag4} shows the construction for $k=4$.   

\begin{figure}[h!]
\centering
\includegraphics{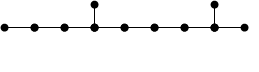} 
\caption{$T_1^-$  when $k=4$} \label{diag4}
\end{figure}
\medskip

We now consider the lower bounds.
\medskip

\noindent 1. \indent  Assume $mp(T+v) < \frac{mp(T)}{2}$.  Let $T'=T+v$ hence $T=T'-v$ --- then $mp(T') < \frac{mp(T'-v)}{2}$ implying that $mp(T'-v) > 2mp(T')$, contradicting the upper bound in part 2.

For sharpness, let $T_2^+$ be the path on $2k+1$ vertices, $k \geq 2$, with a leaf added to vertex $v_{2k}$.  Then $mp(T_2^+)=2k$.  We add a vertex $v$ and connect it to the vertex $v_{k}$ --- then $mp(T_2^++v)=k=\frac{mp(T_2^+)}{2}$.

\medskip

\noindent 2. \indent  Assume $mp(T-v) < \frac{mp(T)}{2}$.  Let $T'=T-v$ hence $T=T'+v$ --- then $mp(T') < \frac{mp(T'+v)}{2}$ implying $mp(T'+v) > 2mp(T')$, contradicting the upperbound in part 1.

For sharpness, we construct the graph $T_2^-$ as follows:  take the path on $2k+1$ vertices for $k \geq 2$ and add a leaf to every vertex except the first and the last so that $mp(T_2^-)=2k$.  Consider the leaf $v$ connected to $v_{k+1}$ --- if we delete this vertex, $mp(T_2^--v)=k=\frac{mp(T_2^-)}{2}$.

\end{proof}

\section{Operations involving two graphs}
\subsection{Cartesian Product}

The Cartesian product $G \cartprod H$ of graphs $G$ and $H$ is a graph such that the vertex set of $G \cartprod H$ is the Cartesian product $V(G) \times V(H)$ and any two vertices $(u,u')$ and $(v,v')$ are adjacent in $G \cartprod H$ if and only if either
\begin{itemize}
\item{$u = v$ and $u'$ is adjacent with $v'$ in $H$, or}
\item{$u' = v'$ and $u$ is adjacent with $v$ in G.}
\end{itemize}

\begin{theorem}
Let $G$ and $H$ be two connected graphs.  Then \[ mp(G)+mp(H)-1 \leq mp(G \cartprod H)  \leq mp(G)mp(H)\] and both bounds are sharp.
\end{theorem}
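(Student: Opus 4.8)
The lower bound I would prove by an explicit construction. Fix a longest degree monotone path $p_1 p_2 \ldots p_s$ of $G$, with $s = mp(G)$ and $\deg_G(p_1) \le \deg_G(p_2) \le \cdots \le \deg_G(p_s)$, and similarly a longest degree monotone path $q_1 q_2 \ldots q_t$ of $H$ with $t = mp(H)$ and $\deg_H(q_1) \le \cdots \le \deg_H(q_t)$. In $G \cartprod H$ form the path that first runs up the fibre over $p_1$, namely $(p_1, q_1), (p_1, q_2), \ldots, (p_1, q_t)$, and then along the fibre over $q_t$, namely $(p_1, q_t), (p_2, q_t), \ldots, (p_s, q_t)$. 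Since $\deg_{G \cartprod H}(x, y) = \deg_G(x) + \deg_H(y)$, the degree sequence along this path is non-decreasing, and it has $s + t - 1$ distinct vertices; hence $mp(G \cartprod H) \ge mp(G) + mp(H) - 1$. For sharpness I would take $G = K_{1,n}$ and $H = K_{1,m}$ with $n, m \ge 2$: then $mp(G) = mp(H) = 2$, and checking the degree classes of $K_{1,n} \cartprod K_{1,m}$ (the ``corners'' $(\mathrm{leaf},\mathrm{leaf})$ of degree $2$, the two kinds of ``axis'' vertices, and the ``centre'' $(\mathrm{centre},\mathrm{centre})$) one sees that a longest degree monotone path is corner--axis--centre, with exactly $3 = mp(G) + mp(H) - 1$ vertices.

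For the upper bound let $R = x_1 x_2 \ldots x_m$ be a longest degree monotone path of $G \cartprod H$, say non-decreasing, and write $x_i = (u_i, w_i)$. The first step is the observation that each of the two coordinate degree sequences, $\deg_G(u_1) \le \deg_G(u_2) \le \cdots \le \deg_G(u_m)$ and $\deg_H(w_1) \le \deg_H(w_2) \le \cdots \le \deg_H(w_m)$, is already non-decreasing: at a step of $R$ exactly one coordinate changes, and since the total degree does not decrease while the unchanged summand stays put, the changed summand does not decrease either. The second step is to cut $R$ into its maximal segments on which $u_i$ is constant. On such a segment the $H$-coordinates are pairwise distinct (as $R$ is a path) and, by the first step, have non-decreasing $H$-degree, so the segment is a degree monotone path of $H$ and has at most $mp(H)$ vertices; moreover a transition between two consecutive such segments is a move in $G$, which leaves the $H$-coordinate fixed, so consecutive segments share their junction $H$-vertex, and the segments' $H$-paths fit together into one degree monotone walk of $H$. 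Cutting $R$ instead into maximal segments on which $w_i$ is constant gives, symmetrically, degree monotone paths of $G$, each with at most $mp(G)$ vertices, fitting together into a degree monotone walk of $G$.

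The crux, and the step I expect to be the main obstacle, is converting this row/column structure into the \emph{product} bound $m \le mp(G) \cdot mp(H)$. The estimate ``(number of rows) $\times$ (maximum size of a row)'' is far too weak, and even $m \le |D_G| \cdot |D_H|$ (where $D_G$, $D_H$ are the coordinate sets actually used on $R$) need not suffice, since a degree monotone path can use strictly more than $mp(G)$ distinct $G$-coordinates. The feature one must exploit is that repetitions are paid for: if two rows of $R$ share the same $G$-coordinate $a$ then their $H$-vertex sets are disjoint, and along $R$ the successive $H$-degrees increase, so revisiting a $G$-vertex costs new, higher-degree $H$-vertices. The plan is therefore an amortized count, grouping vertices of $R$ by their $G$-coordinate and balancing, degree level by degree level, how far the associated walk in $G$ can wander at that level against how many $H$-vertices it must consume to do so, arriving at $m \le mp(G)\cdot mp(H)$; I expect this to require a case analysis according to whether repetitions of a coordinate occur in the interior of $R$ or at an end, in the style of the edge-operation proofs above. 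Finally, for sharpness of the upper bound I would take $G = C_a$ and $H = C_b$ with $a, b \ge 3$: then $mp(C_a) = a$, $mp(C_b) = b$, the torus $C_a \cartprod C_b$ is $4$-regular and Hamiltonian, so any of its Hamiltonian paths is degree monotone and $mp(C_a \cartprod C_b) = ab = mp(C_a)\cdot mp(C_b)$.
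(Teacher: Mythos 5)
Your lower bound is correct and is exactly the paper's argument: the same L-shaped path $(p_1,q_1),\dots,(p_1,q_t),(p_2,q_t),\dots,(p_s,q_t)$ of $s+t-1$ vertices, and essentially the same sharpness example (the paper uses $K_{1,m}\cartprod K_{1,m}$). Your sharpness example for the upper bound, $C_a\cartprod C_b$, is a special case of the paper's construction (a traceable regular graph producted with an arbitrary graph) and is fine. The genuine gap is the upper bound itself: you do not prove it. You correctly establish the two preliminary facts --- that along a longest degree monotone path of $G\cartprod H$ both coordinate degree sequences are separately non-decreasing, and that the maximal constant-first-coordinate segments are degree monotone paths of $H$ of at most $mp(H)$ vertices --- but the passage from there to $m\le mp(G)\,mp(H)$ is left as a ``plan'' for an amortized count that you ``expect'' to require a case analysis. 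That passage is the entire content of the upper bound, so as written the proposal does not prove the theorem.

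For comparison, the paper closes the argument by precisely the counting you considered and set aside: each vertex of the path is an ordered pair, so $m\le |D_G|\cdot|D_H|$ where $D_G$ and $D_H$ are the sets of first and second coordinates actually used, and the paper asserts that the distinct first coordinates, taken in order of appearance, form a degree monotone path in $G$, giving $|D_G|\le mp(G)$ and symmetrically $|D_H|\le mp(H)$. Your objection to this route --- that the induced sequence of $G$-coordinates is a priori only a walk with non-decreasing degrees, which can revisit a vertex (necessarily within a block of constant $G$-degree) and so might use more than $mp(G)$ distinct vertices --- is exactly the point the paper's proof does not address: it simply treats the walk as a path. So to finish along the paper's lines you must justify $|D_G|\le mp(G)$, i.e., rule out or control revisits inside constant-degree blocks; otherwise you must actually carry out the amortized argument you only sketch. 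Either way, the decisive step is currently missing, and your own analysis shows you are aware of precisely where it is missing.
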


\begin{proof}
 Let us first consider the lower bound.  Let $v_1 \ldots v_t$ be a longest degree monotone path in $G$, and let $u_1 \ldots u_s$ be a longest degree monotne path in $H$, both in non-decreasing order.

Consider the path in $G \cartprod H$ with vertex coordinates \[(v_1,u_1)(v_1,u_2)\ldots (v_1,u_s)(v_2,u_s)(v_3,u_s)\ldots (v_t,u_s).\]  This is clearly a degree monotone path in $G \cartprod H$ with $t+s-1$ vertices, and hence $mp(G \cartprod H) \geq mp(G) + mp(H) -1$.

Now let us consider the degree monotone path in in $G \cartprod H$ of maximum length $r= mp(G \cartprod H)$ with vertices $z_i$ for $1 \leq i \leq r$.  Let us label $z_i=(v_{a_i},u_{b_i})$.  So consider the vertices from $z_1$ to $z_i=(v_{a_i},u_{b_i})$: for vertex  $z_{i+1}$, either the $v$ coordinate or the $u$ coordinate will change but not both.

If the $v$ coordinate changes, then $z_{i+1}=(v_{a_{i+1}},u_{b_{i+1}})$ where $a_{i+1}>a_i$ and $b_{i+1}=b_i$, hence it follows that $deg(v_{a_{i+1}}) \geq deg(v_{a_i})$.

If the $u$ coordinate changes, then $z_{i+1}=(v_{a_{i+1}},u_{b_{i+1}})$ where $a_{i+1}=a_i$ and $b_{i+1}>b_i$, hence it follows that $deg(u_{b_{i+1}}) \geq deg(u_{b_i})$.

Now let us consider those vertices in which the index $a_{i+1}>a_i$, which implies that the corresponding vertices in $G$ are distinct --- it is clear that these vertices form a degree monotone paths in $G$.  Hence if the number of such vertices is $t$, $t \leq mp(G)$.

Similiarly, if we consider the vertices in which the index $b_{i+1}>b_i$, the corresponding vertices in $H$ are distinct and form a degree monotone path in $H$ --- if the number of such vertices is $s$, then $s \leq mp(H)$.

Now since for each move from $z_i$ to $z_{i+1}$, only one coordinate changes, we have at most $st$ coordinates, hence $mp(G \cartprod H) \leq mp(G)mp(H)$.

Now we look at construction which achieve these bounds.  Firstly, let $G=H=K_{1,m}$, where $m \geq 2$, and hence $mp(G)=mp(H)=2$. In $G \cartprod H$ there is one vertex of degree $2m$, $2m$ vertices of degree $m+1$ which are independent, and $m^2$ vertices of degree 2 which are independent.  Hence the longest degree monotone path has three vertices so  $mp(G \cartprod H) = 2= mp(G) + mp(H) -1$.

Now for the upper bound, let $G$ be a connected regular graph on $t$ vertices and let $H$ be a graph such that $mp(H)=s$.  Let the vertices of $G$ be $v_1,\ldots, v_t$, and let the vertices $u_1,\ldots,u_s$ in $H$ be vertices on a longest degree monotone path in $H$.

Now in $G \cartprod H$, consider the path \[(v_1,u_1),(v_2,u_1),\ldots,(v_t,u_1),(v_t,u_2),(v_{t-1},u_2),\ldots,(v_1,u_2),(v_1,u_3),\ldots(v_t,u_3),\ldots \] and we carry on in this fashion until we have used all the $st$ vertices.  This path is degree monotone and hence $mp(G \cartprod H) = mp(G)mp(H)$.
\end{proof}

\subsection{Graph join}

The \emph{graph join} $G+H$ of two graphs $G$ and $H$ with disjoint vertex sets, $V(G)$ and $V(H)$ and disjoint edge sets $E(G)$ and $E(H)$ is the graph such that
\begin{itemize}
\item{$V(G+H)=V(G) \cup V(H)$}
\item{$E(G+H)=E(G) \cup E(H) \cup \{xy: x \in V(G), y \in V(H)\}$}
\end{itemize}

We now consider degree monotone paths in $G+H$.

\begin{theorem}
Given two graphs $G$ and $H$, \[mp(G)+mp(H) \leq mp(G+H) \leq |V(G)| + |V(H)|,\] and both bounds are sharp.
\end{theorem}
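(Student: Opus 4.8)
The upper bound needs no real argument: $G+H$ has exactly $|V(G)|+|V(H)|$ vertices, and any path --- degree monotone or not --- has at most as many vertices as the graph containing it. For sharpness I would take $G=K_a$ and $H=K_b$, so that $G+H=K_{a+b}$ and $mp(G+H)=a+b=|V(G)|+|V(H)|$. (More generally the same works for any two connected regular graphs each admitting a Hamiltonian path: concatenate the two Hamiltonian paths, running first the one lying on the side of smaller degree in $G+H$, to get a degree monotone Hamiltonian path of $G+H$.)

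For the lower bound the feature to exploit is that in $G+H$ every vertex of $V(G)$ is adjacent to every vertex of $V(H)$, while passing from $G$ and $H$ to $G+H$ merely raises the degree of every $G$-vertex by $|V(H)|$ and every $H$-vertex by $|V(G)|$, hence does not disturb the order of the degrees within $V(G)$ or within $V(H)$. So let $P_G=v_1\cdots v_a$ with $a=mp(G)$ be a longest degree monotone path in $G$ written in non-decreasing order of degree, and similarly $P_H=u_1\cdots u_b$ with $b=mp(H)$; both sequences are still non-decreasing when degrees are taken in $G+H$. The plan is then to merge these two non-decreasing sequences into a single sequence $w_1,\dots,w_{a+b}$ containing all $a+b$ vertices, with $deg_{G+H}(w_1)\le\cdots\le deg_{G+H}(w_{a+b})$ and with the internal orders $v_1,\dots,v_a$ and $u_1,\dots,u_b$ preserved --- an ordinary stable merge of two sorted lists.

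What remains is to check that $w_1\cdots w_{a+b}$ is genuinely a path of $G+H$; being degree monotone is automatic from the merge. Consecutive vertices $w_i,w_{i+1}$ lying on opposite sides are adjacent by the definition of the join. If $w_i,w_{i+1}$ lie on the same side, say $w_i=v_p$ and $w_{i+1}=v_q$ with $p<q$, then since the merge keeps the $v$'s in their order and nothing of the $G$-side sits strictly between them, we must have $q=p+1$, so $v_pv_{p+1}\in E(G)\subseteq E(G+H)$; the $H$-side is symmetric. Hence $w_1\cdots w_{a+b}$ is a degree monotone path and $mp(G+H)\ge a+b=mp(G)+mp(H)$. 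The degenerate cases $a=1$ or $b=1$ are harmless: the single vertex is simply inserted into the other path at the position dictated by its degree.

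For sharpness of the lower bound I would let $G$ and $H$ have no edges, with $|V(G)|=a\ne b=|V(H)|$, so that $mp(G)=mp(H)=1$ and $G+H=K_{a,b}$. In $K_{a,b}$ any three consecutive vertices of a path have their middle vertex on one side and both neighbours on the other, giving degree pattern $(a,b,a)$ or $(b,a,b)$, which is not monotone because $a\ne b$; hence no path on three or more vertices is degree monotone, so $mp(K_{a,b})=2=mp(G)+mp(H)$, while $|V(G)|+|V(H)|=a+b$ is unbounded. The one step calling for care is the verification that the merged vertex list is actually a path (the rest is routine), and I do not foresee a genuine obstacle there.
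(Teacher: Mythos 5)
Your proof is correct, and the core argument for the lower bound is essentially the paper's: both take longest degree monotone paths in $G$ and $H$ and merge their vertices into a single sequence sorted by degree in $G+H$, using the fact that the join shifts all $G$-degrees by $|V(H)|$ and all $H$-degrees by $|V(G)|$ uniformly. You are more careful than the paper in one respect: the paper simply asserts that the merged sequence ``clearly'' forms a path, whereas you verify it, observing that consecutive merged vertices on opposite sides are adjacent by definition of the join, and that a stable merge forces consecutive same-side vertices to be consecutive on the original path. That check is worth making explicit. Where you genuinely diverge is in the sharpness examples, and yours are arguably cleaner: for the upper bound the paper takes two graphs of equal order with identical degree sequences and builds an alternating Hamiltonian degree monotone path, while you take $K_a + K_b = K_{a+b}$, which is immediate; for the lower bound the paper uses $K_{1,m}+K_k$, while you use two edgeless graphs of distinct orders $a\ne b$, so that the join is $K_{a,b}$ with $mp(K_{a,b})=2=1+1$, with the added benefit that the gap between the two bounds is unbounded. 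Both sets of examples are valid; the paper's upper-bound construction shows the bound is attained by a much wider family, while yours settles sharpness with minimal verification.
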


\begin{proof}
The upper bound is trivial since for any graph $mp(G) \leq |V(G)|$.  So let us consider the lower bound.  Let $P=v_1,v_2,\ldots,v_t$ be a degree monotone path of maximum length in $G$ and let $P^*=u_1,u_2,\ldots,u_s$ be a degree monotone path of maximum length in $H$.  Let us rearrange $\{v_1,\ldots,v_t,u_1,\ldots,u_s\}$ in non-decreasing order according to their degrees in $G+H$, noting that $deg_{G+H}(v)=deg_G(v)+|V(H)|$ while $deg_{G+H}(u)=deg_H(u)+|V(G)|$.  Then it is clear that these vertices form a degree monotone path in $G+H$ of length $s+t$, hence $mp(G+H) \leq mp(G) + mp(H)$.

Let us consider the upper bound.  Let $G$ and $H$ be two graphs such that $|V(G)|=|V(H)|$ and $G$ and $H$ have the same degree sequence.  We write the vertices of $G+H$ in non-decreasing order of their degrees such that each vertex of the $G$ part is followed by the corresponding vertex in the $H$ part.  So for the degree monotone path in $G+H$, we start with the vertex of smallest degree in $G+H$ and alternately take vertices of this same degree from $H$ and $G$ until all vertices of this degree are included in the path:  we then move to the second smallest degree in $G$ and carry out the same procedure for every different degree in the sequence.  There is an even number of vertices in $G+H$ of each degree since $G$ and $H$ have the same number of vertices and the same degree sequence, so this alternating path can be continued until all vertices in $G+H$ have been included, which implies that $mp(G+H)=|V(G)|+|V(H)|$ in this case, achieveing the upper bound.

For the lower bound, consider $G=K_{1,m}$ and $H=K_k$.  So $mp(G)=2$ and $mp(H)=k$.  Then in $G+H$ there are $k+1$ vertices of degree $k+m$, namely the vertices of $H$ and the vertex of degree $m$ in $G$, and the  $m$ vertices of degree $k+1$ that are independent.  So if we can start the path with a vertex of degree $k+1$, and then we must move to a vertex of degree $k+m$, and all the vertices of this degree can be included in the path.  hence $mp(G+H)=1 + k+1=2 +k = mp(G)+mp(H)$.
\end{proof}

\section{Conclusion}

We have given sharp bounds for $mp(G')$ in terms of $mp(G)$ where $G'$ is obtained from $G$ by the most basic operations involving a single vertex or edge.  We have shown that the effect of edge contraction on $mp(G)$ in the case of $K_3$-free graphs  is bounded (above and below) by a multiplicative factor, while there exist $K_4$-free graphs for which $mp(G)=4$ while $mp(G \cdot e)=|V(G)|-1$.  This leads to the following questions:
\begin{enumerate}
\item{Is there a $K_4$-free graph $G$ such that $mp(G)=3$ and $mp(G \cdot e) \geq |V(G)|-1$?}
\item{Is there a characterization of $K_4$-free graphs for which $mp(G \cdot e)$ is bounded above and below by a multiplicative factor?}
\end{enumerate}

We have also obtained sharp bounds for $mp(G \times H)$ in terms of $mp(G)$ and $mp(H)$ where $\times$ is either the Cartesian product of the join of two graphs.  Repeating this for other products, such as the direct product, might be interesting.

Finally, the notion of edge addition gives rise to the question of what the minimum number of edges of a graph $G$ on $n$ vertices can be if adding any edge increses $mp(G)$.  This leads to a problem analogous to the saturation number of a graph \cite{faudree2011survey,kaszonyi1986saturated}, and we shall be considering this in a forthcoming paper.

\bibliographystyle{plain}
\bibliography{dmp2}

\begin{thebibliography}{10}

\bibitem{bollobas2004extremal}
B~Bollob{\'a}s.
\newblock {\em Extremal graph theory}.
\newblock Courier Dover Publications, 2004.

\bibitem{dmpclz}
Y.~{Caro}, J.~{Lauri}, and C.~{Zarb}.
\newblock {Degree Monotone Paths}.
\newblock {\em ArXiv e-prints}, May 2014.
\newblock submitted.

\bibitem{deering2013uphill}
J.~Deering.
\newblock {\em Uphill \& Downhill Domination in Graphs and Related Graph
  Parameters}.
\newblock PhD thesis, {ETSU}, 2013.

\bibitem{downhill2}
J.~Deering, T.W. Haynes, S.T. Hedetniemi, and W.~Jamieson.
\newblock Downhill and uphill domination in graphs.
\newblock 2013.
\newblock submitted.

\bibitem{updownhilldom}
J.~Deering, T.W. Haynes, S.T. Hedetniemi, and W.~Jamieson.
\newblock A polynomial time algorithm for downhill and uphill domination.
\newblock 2013.
\newblock submitted.

\bibitem{eliavs2013higher}
M.~Eli{\'a}{\v{s}} and J.~Matou{\v{s}}ek.
\newblock Higher-order erd{\H{o}}s--szekeres theorems.
\newblock {\em Advances in Mathematics}, 244:1--15, 2013.

\bibitem{erdos1935combinatorial}
P.~Erd{\"o}s and G.~Szekeres.
\newblock A combinatorial problem in geometry.
\newblock {\em Compositio Mathematica}, 2:463--470, 1935.

\bibitem{faudree2011survey}
J.R. Faudree, R.J. Faudree, and J.R. Schmitt.
\newblock A survey of minimum saturated graphs.
\newblock {\em The Electronic Journal of Combinatorics}, 1000:DS19--Jul, 2011.

\bibitem{downhill}
T.~W. Haynes, S.~T. Hedetniemi, J.~D. Jamieson, and W.~B. Jamieson.
\newblock Downhill domination in graphs.
\newblock {\em Discussiones Mathematicae Graph Theory}.
\newblock accepted.

\bibitem{kaszonyi1986saturated}
L.~K{\'a}szonyi and Z.~Tuza.
\newblock Saturated graphs with minimal number of edges.
\newblock {\em Journal of graph theory}, 10(2):203--210, 1986.

\bibitem{dougwest}
D.~B. West.
\newblock {\em Introduction to Graph Theory}.
\newblock Prentice Hall, 2 edition, September 2000.

\end{thebibliography}
\end{document}